\def\BB{{\mathcal B}}
\def\CC{{\mathcal C}}
\def\FF{{\mathcal F}}
\def\GG{{\mathcal G}}
\def\qed
 \ifmmode\eqno{\Box}\fi}
\def\HH{{\mathcal H}}
\DeclareMathOperator{\ex}{ex}
\def\exrlin{{\ex_r^{{\rm lin}}}}
\def\extrilin{{\ex_3^{{\rm lin}}}}
\newtheorem{thm}{Theorem}
\newtheorem{lem}[thm]{Lemma}
\newtheorem{cor}[thm]{Corollary}
\newtheorem{prop}[thm]{Proposition}
\begin{document}
\title[On 3-uniform hypergraphs without a cycle of a given length]
{On 3-uniform hypergraphs \\ without a cycle of a given length}

\author[Z.~F\"uredi]{Zolt\'an F\"uredi}
\address{Alfr\'ed R\'enyi Institute of Mathematics of the Hungarian Academy of Sciences, Budapest, P.O.Box 127, Hungary, H-1364}
\email{z-furedi@math.uiuc.edu}
\author[L.~\"Ozkahya]{Lale \"Ozkahya}
\address{Department of Computer Engineering, Hacettepe University, Beytepe, Ankara, Turkey}
\email{ozkahya@hacettepe.edu.tr}
\thanks{The research of the first author is supported in part by the Hungarian National Science Foundation
 OTKA 104343, by the European Research Council Advanced Investigators Grant 26719
 and by the Simons Foundation grant 317487.
\newline\indent
This work was done while the first author visited the Department of Mathematics and Computer Science, Emory University, Atlanta, GA, USA.
\newline\indent
A major revision of the paper was done during a visit of the first author
 to the Institut Mittag-Leffler (Djursholm, Sweden).
  \newline\indent
{\it 2010 Mathematics Subject Classifications:}
05C35, 05C65, 05D05. 
\newline\indent
{\it Key Words}:  Tur\'an number, triangles, cycles, extremal graphs, triple systems.
    }

\begin{abstract}
We study the maximum number of hyperedges in a 3-uniform hypergraph on $n$ vertices
that does not contain a Berge cycle of a given length $\ell$.
In particular we prove that the upper bound for $C_{2k+1}$-free
hypergraphs is of the order
    $O(k^2n^{1+1/k})$,
improving the upper bound of Gy{\H o}ri and Lemons~\cite{gy-nat-3unif} by a factor of $\Theta(k^2)$.
Similar bounds are shown for linear hypergraphs.
\end{abstract}

\maketitle

\section{A generalization of the Tur\'an problem}

Counting substructures is a central topic of extremal combinatorics.
Given two (hyper)graphs $G$ and $H$ let $N(G\, ;H)$ denote the number of subgraphs of $G$ isomorphic to $H$.
(Usually we consider a labelled host graph $G$).
Note that $N(G\, ;K_2)=e(G)$, the number of edges of $G$.
More generally, $N(\GG\, ; H)$ is the maximum of $N(G\, ;H)$ where $G\in \GG$, a class of graphs.
In most cases, in Tur\'an type problems, $\GG$ is a set of $n$-vertex $\FF$-free graphs, where
 $\FF$ is a collection of forbidden subgraphs.
This maximum is denoted by $N(n,\FF\, ;H)$.
So $N(n, \FF\, ; H)$ is the maximum number of copies of $H$ in an $\FF$-free graph on $n$ vertices.
The Tur\'an number $\ex(n,\FF)$ is defined as $N(n,\FF\, ;K_2)$.
Let  $\ex(m,n,\FF)$ be the maximum number edges in a bipartite graph with parts
 of order $m$ and $n$ vertices that do not contain any member of $\FF$.
$\CC_{\ell}$ is the family of all cycles of length at most $\ell$.
For any graph $G$ and any vertex $x$,
we let $t(G)$ and $t(x)$ denote the number of triangles in $G$ and
the number of triangles containing $x$, respectively.
Let  $t_{\ell}(n):= N(n,C_{\ell}\, ; K_3)$.

Our starting point is the Bondy-Simonovits~\cite{Bondy-Simon} theorem, $\ex(n,C_{2k})\leq 100k n^{1+ 1/k}$.
Recall two contemporary versions  due to Pikhurko~\cite{pik-2011}, Bukh and Z.~Jiang~\cite{bukh}, respectively,
 and a classical result by K\H{o}v\'ari, T. S\'os, and Tur\'an~\cite{KovSosTur}.
For all $k\ge 2$ and $n\ge 1$, we have
\begin{eqnarray}
  \ex(n,C_{2k})&\leq& (k-1)n^{1+1/k}+16(k-1)n, \label{pik}\\
   \ex(n,C_{2k})&\leq& 80\sqrt{k \log k} n^{1+1/k} + 10k^2n,  \label{eq2}\\
  \ex(n,n, C_4) &\leq& n^{3/2}+ 2n. \label{eq3}
  \end{eqnarray}

Erd{\H o}s~\cite{erdos} conjectured that a triangle-free graph on $n$ vertices
can have at most $(n/5)^5$ five cycles and that equality holds for the
blown-up $C_5$ if $5|n$.
Gy{\H o}ri~\cite{gyC5} showed that a triangle-free graph on $n$
vertices contains at most $c(n/5)^5$ copies of $C_5$, where $c < 1.03$.
Grzesik~\cite{Grz}, and independently, Hatami et al.~\cite{Razb-pentagon}
confirmed that Erd{\H o}s' conjecture is true by using Razborov's method of flag algebras, i.e.,
  $N(n,C_3\, ;C_5)\leq (n/5)^5$.

Bollob\'as and Gy{\H o}ri~\cite{bol-gy-2008} asked a related question:
how many triangles can a graph have if it does not contain a $C_5$.
They obtained the upper bound $t_5(n)\le (1+o(1))(5/4) n^{3/2}$ which yields the correct order of magnitude.

Later, Gy{\H o}ri and Li~\cite{gy-li-2009} provided bounds on $t_{2k+1}(n)$.
\begin{equation}\label{eq4}
\binom{k}{2} \ex\left(\frac{n}{k+1}, \frac{n}{k+1}, \CC_{2k}\right) \leq
t_{2k+1}(n)\le \frac{(2k-1)(16k-2)}{3}\ex(n,C_{2k}).
\end{equation}

In Section~\ref{t2k+1} we improve the upper bound by a factor of $\Omega(k)$.
\begin{thm}\label{ct-tri} For $k\ge 2$,
\begin{eqnarray}\label{eq5}
t_{2k+1}(n)&:= & N(n,C_{2k+1}\, ; K_3)\le 9(k-1) \ex\left(\left\lceil \frac{n}{3} \right\rceil,
\left\lceil  \frac{n}{3} \right\rceil, C_{2k}\right),\\
t_{2k}(n) &\le & \frac{2k - 3}{3}\ex(n, C_{2k}).
 \label{eq6}
\end{eqnarray}
\end{thm}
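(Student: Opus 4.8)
The plan is to estimate, for each vertex $v$, the number of triangles through $v$ by analysing the link graph $L_v:=G[N(v)]$, and then to sum over $v$ using the identity $\sum_v e(L_v)=3t(G)$ (each triangle is counted at its three vertices). The driving observation is that a forbidden cycle through $v$ forbids a long path in $L_v$. For \eqref{eq6}, suppose $G$ is $C_{2k}$-free; if $L_v$ contained a path on $2k-1$ vertices $x_1x_2\cdots x_{2k-1}$, then $vx_1x_2\cdots x_{2k-1}v$ would be a $C_{2k}$ in $G$. Hence each $L_v$ has no path on $2k-1$ vertices, and the classical Erd\H{o}s--Gallai path theorem gives
\[
  e(L_v)\le \frac{2k-3}{2}\,d(v).
\]
Summing over $v$ yields $3t(G)\le\frac{2k-3}{2}\sum_v d(v)=(2k-3)e(G)$, and since a $C_{2k}$-free graph satisfies $e(G)\le\ex(n,C_{2k})$, inequality \eqref{eq6} drops out with no further work.

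For the odd case \eqref{eq5} I would run the same engine, but two things change. First, the path obstruction shifts by one: if $L_v$ contains a path on $2k$ vertices then $G$ has a $C_{2k+1}$ through $v$, so each link is $P_{2k}$-free and Erd\H{o}s--Gallai gives $e(L_v)\le (k-1)\,d^{\triangle}(v)$, where $d^{\triangle}(v)$ is the number of neighbours of $v$ lying in a triangle with $v$ (the non-isolated vertices of $L_v$). Summing, and using $\sum_v d^{\triangle}(v)=2e(G^{\triangle})$ where $G^{\triangle}$ is the subgraph of edges lying in at least one triangle, I get $3t(G)\le 2(k-1)e(G^{\triangle})$. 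Second, and this is the real difficulty, $G$ is now only $C_{2k+1}$-free, so $e(G)$ is not bounded by any sub-quadratic Tur\'an number (complete bipartite graphs are $C_{2k+1}$-free). The whole game therefore becomes bounding $e(G^{\triangle})$, the number of edges that sit in some triangle, by a $C_{2k}$ Tur\'an number.

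To do this I would first discard most edges and pass to a tripartite picture. Fix a partition $V=V_1\cup V_2\cup V_3$ with each part of size at most $\lceil n/3\rceil$; a uniformly random balanced partition makes any fixed triangle rainbow (one vertex per part) with probability at least $2/9$, so some partition keeps $R\ge\frac{2}{9}t(G)$ rainbow triangles. Let $G^{*}$ be the tripartite graph obtained by keeping only the edges running between distinct parts. Its triangles are exactly the rainbow ones, so rerunning the link argument inside $G^{*}$ gives $3R\le 2(k-1)e(G^{*\triangle})$, and now $G^{*\triangle}$ splits as an edge-disjoint union of three bipartite graphs $H_{12}\cup H_{13}\cup H_{23}$, where $H_{ij}$ collects the base edges running between $V_i$ and $V_j$.

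The crux is to prove that each $H_{ij}$ is $C_{2k}$-free, and this is exactly where the tripartition pays off. Suppose $H_{12}$ contained a cycle $a_1b_1a_2b_2\cdots a_kb_ka_1$ with $a_i\in V_1$ and $b_i\in V_2$; every edge of it is a genuine edge of $G$, so it is a $C_{2k}$ in $G$. The edge $a_1b_1$ is the base of a rainbow triangle $a_1b_1c$ with apex $c\in V_3$, and because the whole cycle lives in $V_1\cup V_2$ the apex $c$ is automatically \emph{off} the cycle; replacing $a_1b_1$ by the path $a_1cb_1$ then produces a $C_{2k+1}$ in $G$, a contradiction. It is precisely this guaranteed disjointness of apex and base-cycle that the raw subgraph $G^{\triangle}$ cannot provide, which is why the detour through a balanced tripartition is forced on us. Granting this, each $H_{ij}$ has both parts of size at most $\lceil n/3\rceil$, so $e(G^{*\triangle})\le 3\,\ex(\lceil n/3\rceil,\lceil n/3\rceil,C_{2k})$; chaining the inequalities gives $R\le 2(k-1)\ex(\lceil n/3\rceil,\lceil n/3\rceil,C_{2k})$ and finally $t(G)\le\frac{9}{2}R\le 9(k-1)\ex(\lceil n/3\rceil,\lceil n/3\rceil,C_{2k})$, which is \eqref{eq5}. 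The one routine point I would still verify carefully is that a single balanced partition can meet both the size bound $\lceil n/3\rceil$ and the $2/9$ rainbow fraction; computing the rainbow probability for a random equipartition directly shows it is at least $2/9$, so this is safe.
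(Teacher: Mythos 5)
Your proposal is correct and follows essentially the same route as the paper: a balanced 3-partition retaining at least a $2/9$ fraction of the triangles as rainbow ones, an Erd\H{o}s--Gallai bound on the links to convert triangle counts into edge counts, and the apex-replacement argument (using that the apex of a rainbow triangle lies outside $V_i\cup V_j$) to show each bipartite piece is $C_{2k}$-free. Your graph $G^{*\triangle}$ coincides with the paper's $G'$ (the edges lying in rainbow triangles), and the constants assemble identically, so the two proofs differ only in bookkeeping.
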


The inequalities (\ref{pik}), (\ref{eq3}) and (\ref{eq5}) give
 $t_{2k+1}(n)\leq
          9(k-1)^2\left((2/3)n\right)^{1+1/k}+O(n)$
for $k\geq 3$ and $t_{5}(n)\leq  \sqrt{3}n^{3/2} + O(n)$. This latter one is not better than
 the Bollob\'as-Gy\H ori bound.
However, our constant factor in Theorem~\ref{ct-tri} is the best possible in the following sense.
It is widely believed that that the Tur\'an numbers in the above statements are 'smooth', i.e.,
there are constants $a_k, b_k$ depending only on $k$ such that
$\ex(n,n,C_{2k}) = (a_k+o(1)) n^{1 + 1/k}$ and
$\ex(n,n,\CC_{2k}) = (b_k+o(1)) n^{1 + 1/k}$.
If these are indeed true then the ratio of the upper bound in (\ref{eq5}) and the lower bound in (\ref{eq4})
 is bounded by a constant factor of $O(a_k/b_k)$.
It is also believed that the sequence  $a_k/b_k$ is bounded (as $k\to \infty$), so further essential improvement is probably not possible.

Since the first version of this manuscript (2011)
 Alon and Shikhelman~\cite{alon-shik} improved the upper bound in Theorem~\ref{ct-tri} by a constant factor
to  $(16/3)(k-1) \ex(\lceil n/2\rceil, C_{2k})$ and showed that $t_5(n)\le (1+o(1))(\sqrt{3}/2) n^{3/2}$.
Nevertheless, we include our proof in Section~\ref{t2k+1} for completeness, and because we use
 Theorem~\ref{ct-tri} in our main result in the next section.

\section{Berge cycles}

A {\it Berge cycle} of length $k$ is a family of
distinct hyperedges $H_0,$ $\dots,$ $H_{k-1}$ such that
there are distinct vertices $v_0,\dots,v_{k-1}$ satisfying
\begin{equation*}
v_iv_{i+1}\subset H_i\; \text{for}\; 0\le i\le k-1 \pmod{k}.
\end{equation*}
A hypergraph is {\it linear}, also called nearly disjoint,
if every two edges meet in at most one vertex.
Let $C_{\ell}^{(3)}$ be the collection of 3-uniform Berge cycles of length $\ell$.

We write $\ex_r(n, \FF)$ ($\exrlin(n, \FF)$, resp.)
to denote the maximum number of hyperedges
in a $r$-uniform (and linear, resp.) hypergraph on $n$ vertices
that does not contain any member of $\FF$.
Gy{\H o}ri and Lemons~\cite{gy-nat-3unif} showed that
\begin{equation}\label{gy-lem3}
\ex\left(\left\lfloor \frac{n}{3}\right\rfloor , \left\lfloor \frac{n}{3}\right\rfloor , \CC_{2k}\right) \leq
\ex_3(n, C^{(3)}_{2k+1})< 4k^4n^{1+\frac{1}{k}} + 15k^4n + 10k^2n.
\end{equation}
The order of magnitude of the upper bound probably cannot be improved (as $k$ is fixed and $n\to \infty$).

Gy{\H o}ri and Lemons~\cite{gy-nat-kunif} extended their result
to $C^{(3)}_{2k}$-free 3-uniform hypergraphs (and also to $m$-uniform hypergraphs)
by showing  that the same lower bound as in~\eqref{gy-lem3}
holds for $\ex_3(n, C^{(3)}_{2k})$ and that $\ex_3(n, C^{(3)}_{2k})\leq c(k)n^{1+\frac{1}{k}}$.
The construction showing the lower bound in (\ref{gy-lem3}) is defined by considering a balanced bipartite
graph $G$ on $n/3+n/3$ vertices which is extremal not containing any members of $\CC_{2k}$.
A $3$-uniform $C^{(3)}_{2k}$-free hypergraph $\HH$ is formed by doubling each vertex in one of
the parts of $G$, thus turning each edge of $G$ to a hyperedge of $\HH$.
The number of hyperedges in $\HH$ is $e(G)=\ex(n/3, n/3, \CC_{2k})$.

In this paper, we make improvements on the bounds on $\ex_3(n, C^{(3)}_{2k+1})$ and
$\ex_3(n, C^{(3)}_{2k})$.
First, observe that trivially
\begin{equation}\label{eq8}
 t_{2k+1}(n)
 \leq \ex_3(n, C^{(3)}_{2k+1}).
   \end{equation}
(Consider the triple system defined by the triangles of a $C_{2k+1}$-free graph).
So (\ref{eq4}) gives a lower bound which (probably) improves the lower bound
 in (\ref{gy-lem3}) by a factor of $\Omega(k)$.

The aim of this paper is to improve the upper bound in (\ref{gy-lem3}) by a factor of
 (at least) $\Omega(k^2)$ and also to simplify the original proof.
In Section~\ref{3unif} we reduce the upper bound into three subproblems as follows.

\begin{thm}\label{main-thm-3unif} For $k\ge 2$ we have
\begin{eqnarray}\label{eq9}
\ex_3(n, C^{(3)}_{2k+1}) &\leq&  
                          t_{2k+1}(n)   + 4\ex(n, C_{2k}) + 12 \extrilin(n,C^{(3)}_{2k+1}), \\
\ex_3(n, C^{(3)}_{2k}) &\leq&  
                             t_{2k}(n)  + \ex(n, C_{2k}).\label{eq10}
\end{eqnarray}
\end{thm}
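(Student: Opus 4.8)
**The plan is to classify the hyperedges of a $C^{(3)}_{2k+1}$-free (resp. $C^{(3)}_{2k}$-free) 3-uniform hypergraph $\HH$ according to how they can be "traced" by an underlying graph on the same vertex set, and to bound each class separately by one of the three (resp. two) terms on the right-hand side.** Given $\HH$ on $n$ vertices, I would first build an auxiliary multigraph/graph $G$ by choosing, for each hyperedge $H=\{x,y,z\}$, a pair of its vertices to "represent" it, i.e. an edge $e(H)\subset H$. The guiding intuition is that a Berge cycle of length $\ell$ in $\HH$ corresponds to a genuine cycle $v_0v_1\cdots v_{\ell-1}$ in which consecutive vertices lie in a common hyperedge; so if I can route most hyperedges through edges of an ordinary graph $G$ without creating a $C_{2k+1}$ (resp. $C_{2k}$) in $G$, then $e(G)\le \ex(n,C_{2k})$ controls that part, while hyperedges whose representing pairs form triangles get absorbed into the $t_{2k+1}(n)$ (resp. $t_{2k}(n)$) term via (\ref{eq8}).

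I expect the natural partition to be into three types. \emph{Type A:} hyperedges that can be assigned distinct representing pairs so that the resulting graph is simple and $C_{2k}$-free; these contribute at most $\ex(n,C_{2k})$, and the factor $4$ in (\ref{eq9}) should come from the overhead of making the assignment injective (at most a bounded number of hyperedges can share a representing pair before forcing a short Berge cycle). \emph{Type B:} hyperedges that are forced to pile up as triangles — three mutually "linked" vertices each pair of which lies in a hyperedge — which one collects into a $C_{2k+1}$-free (resp. $C_{2k}$-free) triangle configuration bounded by $t_{2k+1}(n)$ (resp. $t_{2k}(n)$). \emph{Type C:} the residual linear part, where after removing Types A and B what survives is a \emph{linear} $C^{(3)}_{2k+1}$-free subhypergraph, bounded directly by $\extrilin(n,C^{(3)}_{2k+1})$ with its factor $12$ absorbing the multiplicity lost in passing from the full hypergraph to a linear core. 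For the even case (\ref{eq10}) the linear term does not appear, which suggests that in that setting the representing-pair graph can be taken genuinely $C_{2k}$-free on the nose, leaving only the triangle term and a single $\ex(n,C_{2k})$ term with constant $1$.

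The crux of the argument — and the main obstacle — will be showing that a Berge cycle of length $2k+1$ (resp. $2k$) in $\HH$ \emph{must} descend to a forbidden structure in one of the three receiving problems. Concretely, I would need a pigeonhole/counting lemma guaranteeing that if more than $t_{2k+1}(n)+4\ex(n,C_{2k})+12\,\extrilin(n,C^{(3)}_{2k+1})$ hyperedges are present, then the representing edges cannot simultaneously avoid a $C_{2k}$ in the simple graph, a $C_{2k+1}$ among the triangles, and a Berge cycle in the linear remainder. The delicate point is controlling hyperedge \emph{overlaps}: two hyperedges sharing two vertices, or many hyperedges sharing a single vertex, can create Berge cycles "for free," and the constants $4$ and $12$ are precisely what is needed to quarantine these degeneracies. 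I would handle this by a careful edge-coloring/representative-selection procedure, first peeling off high-multiplicity pairs (sharing two vertices) into the triangle and linear buckets, then arguing that the leftover graph, having bounded pair-multiplicity, transfers cycles faithfully. Verifying that no Berge cycle is inadvertently destroyed by the reassignment — that the decomposition is both exhaustive and "Berge-cycle-preserving" — is where the technical weight of the proof will lie.
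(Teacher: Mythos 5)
Your three-bucket architecture (triangle part, graph part, linear part) does match the shape of the paper's decomposition, but the proposal never supplies the ideas that make each bucket bounded, and where it commits to specifics it is wrong or circular. The paper classifies hyperedges by pair multiplicity: $\HH_2$ is the set of triples all three of whose pairs lie in at least two hyperedges, and $\HH_1$ is the set of triples owning a private pair $P(E)$ contained in no other hyperedge. The first missing idea is the key proposition that the graph $G_2$ of all pairs of multiplicity at least $2$ is $C_\ell$-free, proved via Hall's theorem: each edge of a putative $\ell$-cycle lies in at least two triples while each triple covers at most two cycle edges, so a system of distinct representatives exists and yields a Berge $\ell$-cycle. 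Without this you cannot bound your ``Type B'' bucket by $t_{2k+1}(n)$; it is not enough that the triangles exist, they must live inside a $C_{2k+1}$-free graph, and your description of Type B (three mutually linked vertices) contains no such argument. Your Type A, in turn, is defined circularly --- ``hyperedges that can be assigned distinct representing pairs so that the resulting graph is simple and $C_{2k}$-free'' --- which assumes exactly what needs to be proved.

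The second missing idea concerns the constants, and your guesses for them are incorrect. The factor $4$ does not come from ``overhead of making the assignment injective''; it comes from a random $2$-coloring of the vertices: with probability $1/4$ a triple $E\in\HH_1$ has its private pair $P(E)$ inside one color class and its third vertex $w$ in the other, yielding $\HH_3\subset\HH_1$ with $|\HH_3|\ge|\HH_1|/4$. Then $\HH_3$ is split according to whether $\max(\deg(w,u),\deg(w,v))\ge 3$, where $P(E)=\{u,v\}$. For that sub-bucket the paper proves the private pairs form a $C_{2k}$-free graph: a $2k$-cycle of private pairs lies entirely in one color class, the third vertices of the corresponding triples lie in the other class (hence off the cycle), and the multiplicity-$\ge 3$ hypothesis supplies one extra hyperedge, so the $2k$ triples together with this extra one lift to a Berge cycle of length $2k+1$. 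Note the parity bookkeeping here --- an odd Berge cycle is produced from an even graph cycle --- which your plan does not address at all. The complementary sub-bucket has all pair multiplicities at most $2$, so a greedy argument extracts a linear subfamily of at least a third of it, giving $12=4\cdot 3$. Since your proposal explicitly defers exactly these steps (``where the technical weight of the proof will lie''), it is an outline of the statement's shape rather than a proof.
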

\noindent
The first and the third terms in (\ref{eq9}) are both lower bounds, and probably the middle term is the smallest one.
In Section~\ref{sec-linear} we estimate the third term. 
\begin{thm}\label{ck-linear}
 For $k\ge 2$ we have
\begin{equation}\label{eq11}
\extrilin(n, C^{(3)}_{2k+1}) \leq    2k n^{1 + 1/k} + 9kn.
\end{equation}
\end{thm}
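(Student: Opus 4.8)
The plan is to pass to the \emph{shadow graph} $G$ on the same vertex set $V$, where $uv\in E(G)$ whenever some hyperedge of $\HH$ contains both $u$ and $v$. Since $\HH$ is linear, each pair lies in at most one hyperedge, so the three pairs inside a hyperedge are distinct and no pair is ever reused; hence $e(G)=3m$ with $m=e(\HH)$, and $G$ is the edge-disjoint union of the $m$ triangles spanned by the hyperedges. Each edge of $G$ thus carries a well-defined \emph{label}, namely the unique hyperedge containing it, and a cycle of $G$ is a Berge cycle of the same length precisely when no two of its consecutive edges carry the same label (equivalently, no three consecutive vertices span a hyperedge). At the outset I would perform the standard cleanup: iteratively delete vertices of $G$ of degree below a suitable threshold; this removes $O(kn)$ edges, lets me assume a lower bound on the minimum degree, and is what will account for the additive $9kn$ term.

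The heart of the argument is a construction that manufactures an odd Berge cycle of the \emph{exact} length $2k+1$ out of an even object. Fix a hyperedge $T=\{x,y,z\}$ and suppose $G$ contains an $x$--$y$ path $P$ of length $2k$ that avoids the vertex $z$ and along which no two consecutive edges share a label. Closing $P$ with the edge $xy$ yields a cycle of length $2k+1$, and I would verify that it is a genuine Berge cycle: by linearity two edges of a path can share a label only if they are consecutive, so the $2k$ labels along $P$ are pairwise distinct; avoiding $z$ guarantees that the closing edge $xy$ (labelled $T$) differs from the labels of the two path-edges incident to $x$ and to $y$; hence all $2k+1$ labels are distinct. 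As $\HH$ contains no member of $C^{(3)}_{2k+1}$, no such path $P$ can exist for any hyperedge $T$ and any of its three pairs.

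Consequently, for \emph{every} edge $xy$ of $G$ --- and every edge is labelled by a unique hyperedge $T$ with a third vertex $z$ --- there is no clean $x$--$y$ path of length $2k$ in $G-z$. The remaining task is to convert this forbidden even configuration into an edge bound, and here I would run the Bondy--Simonovits even-cycle counting in the sharp form (\ref{pik}): writing a forbidden $x$--$y$ path of length $2k$ as the concatenation of two clean paths of length $k$ meeting at a midpoint, the absence of such a path for every edge $xy$ forces exactly the neighbourhood-growth and path-counting inequalities that drive (\ref{pik}), now with an extra factor coming from the three labelled pairs per hyperedge. Summing the resulting local estimate over the edges of $G$ and correcting for this overcount is designed to give $3m=e(G)\le 6kn^{1+1/k}+O(kn)$, that is, the claimed $\extrilin(n,C^{(3)}_{2k+1})\le 2kn^{1+1/k}+9kn$.

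The main obstacle is precisely the parity of the forbidden cycle. One cannot bound $e(G)$ by forbidding an \emph{odd} cycle directly, since a dense bipartite graph has no odd cycle at all; the entire argument leans on the fact that $G$ is an edge-disjoint union of triangles, exploited to close an \emph{even} path into the odd cycle $C_{2k+1}$ through a single triangle edge. Making this rigorous is delicate on two fronts: one must produce paths that are simultaneously \emph{clean} (no two consecutive edges sharing a label) and avoid the relevant apex $z$, so that the closed cycle really is a Berge cycle on $2k+1$ distinct hyperedges; and one must control the constant so that this lossy passage through individual triangles still yields the factor $2k$ rather than something larger. Disposing of the degenerate cases, where a candidate path meets $z$ or repeats a label, by retreating to a shortest such path, is where most of the technical work will lie.
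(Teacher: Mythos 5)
Your opening reduction is sound: in the shadow graph $G$ of a linear triple system every edge carries a unique label (its hyperedge); two equally-labelled edges must share a vertex, hence be consecutive on any path or cycle; so a cycle in which no two consecutive edges share a label is a genuine Berge cycle, and closing a ``clean'' $x$--$y$ path of length $2k$ avoiding $z$ with the edge $xy$ of the triple $\{x,y,z\}$ does yield a Berge $C_{2k+1}$. This correctly translates $C^{(3)}_{2k+1}$-freeness of $\HH$ into: for every hyperedge $\{x,y,z\}$ there is no clean $x$--$y$ path of length $2k$ in $G-z$.

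However, the heart of the proof --- converting this forbidden configuration into $e(G)\le 6kn^{1+1/k}+O(kn)$ --- is only asserted, and the assertion is where the whole difficulty sits. You claim the absence of such paths ``forces exactly the neighbourhood-growth and path-counting inequalities that drive \eqref{pik}.'' It does not: the proofs of \eqref{pik} (Bondy--Simonovits, Pikhurko, Bukh--Jiang) require the host graph to contain no $C_{2k}$ at all, and they repeatedly use that \emph{every} path or cycle encountered during the BFS-expansion argument is available as a forbidden structure. Your $G$ is an edge-disjoint union of triangles: it is full of triangles, of even cycles, and of long cycles that are not Berge because of label coincidences, so none of that machinery applies as a black box or even as a template. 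What you actually know is far weaker --- only \emph{clean} paths avoiding one prescribed apex are forbidden --- and cleanness is not preserved by the standard argument: a BFS tree of $G$ may contain two tree edges from the same hyperedge, so the paths it produces need not be clean, and the theta-graph lemmas behind \eqref{pik} produce paths over which you have no label control. Threading the rainbow condition through the entire expansion argument is precisely the technical content of the paper's proof: it never works with the full shadow graph, but with graphs $G_\pi$ coming from a choice function that selects at most one pair per hyperedge (so every subgraph is automatically multicolored, i.e.\ clean), takes a lexicographically maximal multicolored tree, shows each level spans no $\Theta_{\ge 2k}$ (Lemma~\ref{lem15}, using Corollary~\ref{cor-11} with the odd length $2k+1-2i$ --- note the paper's mechanism for manufacturing odd Berge cycles is different from your triangle-closing one), and then extracts the growth inequality of Lemma~\ref{lem-17} by classifying the hyperedges meeting each level. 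Your proposal offers no substitute for any of this, and explicitly defers it (``where most of the technical work will lie''), so the proof has a genuine gap at its core. A smaller inaccuracy: deleting vertices of degree below a threshold only costs $O(kn)$ edges if the threshold is $O(k)$, which is too weak to run an expansion argument; the minimum-degree reduction must instead be done as in the paper, taking $\delta$ to be a third of the average degree and passing to a subhypergraph of minimum degree at least $\delta$.
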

\noindent
We were not able to relate the left hand side directly to $\ex(n,C_{2k})$.
In fact, just like in Gy{\H{o}}ri and Lemons' proof~\cite{gy-nat-3unif}, we reiterate a version
of the original proof of Bondy and Simonovits~\cite{Bondy-Simon} (as everybody else did
in~\cite{verst-2000}, \cite{pik-2011}, \cite{jiang}, and in~\cite{bukh}).
Our rendering is much simpler than~\cite{gy-nat-3unif}.
For the even case $\extrilin(n, C^{(3)}_{2k}) \leq  \ex(n,C_{2k})$ is obvious by
 selecting a pair from each hyperedge in a linear $C_{2k}$\! -free triple system.
We have no matching lower bound for $\extrilin(n, C^{(3)}_{\ell})$
 other than what follows from the random method.
Collier, Graber and Jiang~\cite{jiang} proved that
$\exrlin(n, C^{(r)}_{2k+1})\leq$ $\alpha _{k,r}$ $n^{1 + 1/k}$, but their
$\alpha_{k,r}$ is greater than $r(2k)^r$.
They find not only a Berge cycle but a \emph{linear cycle}, i.e., a cyclic list of triples
 such that consecutive sets intersect in exactly one element and nonconsecutive sets are disjoint.

Theorems~\ref{ct-tri},~\ref{main-thm-3unif} and~\ref{ck-linear} together with (\ref{pik}) imply
\[
\ex_3(n, C^{(3)}_{2k+1})\leq (9k^2+ 10k+5)n^{1 + 1/k} + O(k^2n)
\]
and $\ex_3(n, C^{(3)}_{2k}) \leq \frac{1}{3}(2k+9)(k-1)n^{1+1/k}+O(k^2 n)$.
Using (\ref{eq2}) one can lower the main coefficient to $O(k^{3/2}\sqrt{\log k})$.
If the smoothness conjectures concerning $\ex(n,C_{2k})$ and $\ex(n,n,\CC_{2k})$ hold, then
 the ratio of the upper bound (\ref{eq9}) and lower bound (\ref{eq8}) is of $O(a_k/b_k)$.

\section{Counting Triangles in $C_{2k}$-free and $C_{2k+1}$-free Graphs }\label{t2k+1}

We need the following classical result of Erd{\H o}s and Gallai~\cite{EG-path} on paths.
\begin{equation}\label{EG}
\ex(n,P_k) \leq \frac{k-2}{2}n.
\end{equation}

\begin{lem}\label{tg-count}
If $G$ is a $C_{\ell}$\! -free graph, then
$t(G)  \le  \frac{1}{3}(\ell - 3)e(G)$.
\end{lem}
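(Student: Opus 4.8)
The plan is to bound the number of triangles through each vertex by examining its neighborhood, and then to sum over all vertices. First I would use the double-counting identity $3t(G)=\sum_{v} t(v)$, where each triangle is counted once at each of its three vertices. The point of this reorganization is that $t(v)$, the number of triangles containing $v$, equals the number of edges of the induced subgraph $G[N(v)]$ on the neighborhood of $v$: a triangle through $v$ is precisely an edge joining two neighbors of $v$, so $t(v)=e(G[N(v)])$.

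The key step is to show that $G[N(v)]$ contains no long path. Suppose $u_1u_2\cdots u_{\ell-1}$ were a path on $\ell-1$ vertices, all of them lying in $N(v)$. Then $v\,u_1u_2\cdots u_{\ell-1}\,v$ would be a cycle whose edges are $vu_1$, the $\ell-2$ edges of the path, and $u_{\ell-1}v$, for a total of exactly $\ell$ edges, i.e.\ a copy of $C_{\ell}$, contradicting the hypothesis. Hence $G[N(v)]$ is $P_{\ell-1}$\!-free (it has no path on $\ell-1$ vertices).

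Now I would apply the Erd{\H o}s--Gallai bound (\ref{EG}) with $k=\ell-1$ to the graph $G[N(v)]$, which has $\deg(v)$ vertices, to obtain $t(v)=e(G[N(v)])\le \frac{\ell-3}{2}\deg(v)$. Summing over all vertices and using the handshake identity $\sum_v \deg(v)=2e(G)$ gives
\[
3t(G)=\sum_{v} t(v)=\sum_{v} e(G[N(v)])\le \frac{\ell-3}{2}\sum_{v}\deg(v)=\frac{\ell-3}{2}\cdot 2e(G)=(\ell-3)e(G),
\]
and dividing by $3$ yields the claimed inequality $t(G)\le \frac{1}{3}(\ell-3)e(G)$.

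The main (indeed the only) obstacle is the neighborhood path-to-cycle correspondence in the second paragraph; once that is established, the result follows immediately from the Erd{\H o}s--Gallai estimate and the double counting. The one place where I would be careful is the edge count of the cycle $v\,u_1\cdots u_{\ell-1}\,v$, to confirm that it has length exactly $\ell$, so that a forbidden $C_{\ell}$ (rather than a cycle of a different length) is produced.
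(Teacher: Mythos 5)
Your proof is correct and is essentially identical to the paper's own argument: both use $3t(G)=\sum_v e(G[N(v)])$, observe that $G[N(v)]$ is $P_{\ell-1}$-free since a path on $\ell-1$ neighbors of $v$ would close up with $v$ into a $C_\ell$, and then apply the Erd\H{o}s--Gallai bound (\ref{EG}) and the handshake identity. The only difference is that you spell out the path-to-cycle step and the edge count, which the paper leaves implicit.
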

\begin{proof}
For any vertex $x$, $t(x)$ equals to the number of edges induced by $N(x)$.
Therefore,
\begin{equation*}
t(G)  = \frac{1}{3} \sum_{x\in V(G)} t(x) = \frac{1}{3} \sum_{x\in V(G)} e(G[N(x)]).
\end{equation*}
The subgraph induced by $N(x)$ does not contain $P_{\ell - 1}$, because $G$ is $C_{\ell}$-free.
Therefore, by~(\ref{EG}), we have
\begin{equation*}
 e(G[N(x)]) \leq \frac{1}{2} (\ell - 3)\deg(x).
\end{equation*}
We obtain
\begin{equation*}
t(G)  \le \frac{1}{3} \sum_{x\in V(G)}  \frac{1}{2} (\ell - 3)\deg(x) =
 \frac{1}{3}(\ell - 3)e(G).  \qed
\end{equation*}

\end{proof}

Note that Lemma~\ref{tg-count} implies the upper bound (\ref{eq6}) for $t_{2k}(n)$.

\begin{proof}[Proof of Theorem~\ref{ct-tri}]
Let $G$ be a $C_{2k+1}$-free graph, $k\ge 2$, with the $n$ element vertex set $V$.
Let $\HH$ be the family of triangles in $G$.
Given any 3-partition (or 3-coloring) $\{V_1, V_2, V_3\}$ of $V$
 let $\HH(V_1, V_2, V_3)$ be the 3-partite induced subhypergraph of $\HH$ with these parts,
 i.e.,  $\HH(V_1, V_2, V_3):=\{ T\in \HH: |T\cap V_i|=1$ for all $1\leq i\leq 3\}$.
Standard averaging argument shows that there is a partition such that each color class
 $V_i$ with color $i$ has size $\lfloor (n+i-1)/3\rfloor$, $1\le i\le 3$, and
 the number of triples in $\HH':= \HH(V_1, V_2, V_3)$ is at least $2/9$'th of the number of triples in $\HH$.
So we have $|\HH|\leq (9/2)|\HH'|$.

Let $G'$ be the edges of $G$ contained in any triple from $\HH'$.
Since $t(G) =|\HH|$ and $t(G') =|\HH'|$, we have   $t(G) \leq (9/2)t(G')$.
From now on, our aim is to give an upper estimate for  $t(G')$.
Since $t(G') \leq \frac{1}{3}(2k-2)e(G')$ by Lemma~\ref{tg-count}, we have that
\[
t(G)  \leq  \frac{9}{2} t(G') \leq 3(k-1) e(G').
\]
To complete the proof of Theorem~\ref{ct-tri} we only need an appropriate upper bound on $e(G')$.

Let $G_{ij}$ be the bipartite subgraph of $G'$ induced by the vertex set
$V_i\cup V_j$, $1\le i<j\le 3$.
Assume that there exists a copy $L$ of $C_{2k}$ in $G_{ij}$ for some $i$ and $j$.
Let $x$ and $y$ be two adjacent vertices in $L$.
Since there exists a triangle in $G'$ with vertices $x, y, z$
for some  $z\in V_k$ ($k\ne i,j$),
there exists a copy of $C_{2k+1}$ in $G$ with the edge set $(E(L)-\{xy\})\cup \{xz,yz\}$, a contradiction.
Therefore, $G_{ij}$ is $C_{2k}$-free.
We obtain
\begin{equation*} 
e(G')=\sum_{1\le i<j\le 3} e(G_{i,j}) \leq 3 \ex(\lceil n/3 \rceil, \lceil n/3 \rceil, C_{2k}).
 \qed
\end{equation*}

\end{proof}

\section{$C^{(3)}_{\ell}$-free 3-uniform Hypergraphs}\label{3unif}

\begin{proof}[Proof of Theorem~\ref{main-thm-3unif}]~\\
For a pair of vertices $u$ and $v$, $\deg_\HH(u, v)$ (or just $\deg(u,v)$) denotes
the number of hyperedges of $\HH$ containing both $u$ and $v$.
\begin{prop}\label{key-prop}
Let $\HH$ be a $C^{(3)}_{\ell}$-free hypergraph, $\ell\ge 3$.
Let $G_2:=G_2(\HH)$ be the graph on the vertex set of $\HH$ such that
$E(G_2) := \{uv: \deg(u,v)\ge 2\}$.  Then, $G_2$ is $C_{\ell}$-free.
\end{prop}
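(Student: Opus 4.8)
The plan is to argue by contradiction: suppose $G_2$ contains a copy of $C_{\ell}$ on vertices $v_0,v_1,\dots,v_{\ell-1}$, with $v_iv_{i+1}\in E(G_2)$ for all $i$ (indices mod $\ell$). By the definition of $G_2$, each pair $\{v_i,v_{i+1}\}$ lies in at least two hyperedges of $\HH$. Recalling the definition of a Berge cycle, it suffices to select \emph{distinct} hyperedges $H_0,\dots,H_{\ell-1}$ with $\{v_i,v_{i+1}\}\subseteq H_i$ for every $i$: together with the (automatically distinct) cycle vertices $v_0,\dots,v_{\ell-1}$, such a family is exactly a member of $C^{(3)}_{\ell}$, contradicting the assumption that $\HH$ is $C^{(3)}_{\ell}$-free.

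I would recast the distinctness requirement as a system of distinct representatives and apply Hall's theorem. Consider the bipartite graph $B$ whose one class consists of the $\ell$ cycle edges $e_i:=\{v_i,v_{i+1}\}$ and whose other class is $E(\HH)$, joining $e_i$ to every hyperedge containing it; a matching of $B$ saturating the $e_i$ is precisely the desired family of distinct hyperedges. Each $e_i$ has degree at least $2$ in $B$.

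The key point, and the heart of the argument, is that for $\ell\ge 4$ a single $3$-element hyperedge can contain at most two of the cycle edges $e_i$. Indeed, a hyperedge has only three pairs, and three of the $e_i$ lying inside one hyperedge would be three cycle edges spanning just three vertices, i.e.\ a triangle formed by cycle edges, which forces $\ell=3$. Granting this bound, Hall's condition follows by a double count: for any set $S$ of cycle edges, counting the edges of $B$ between $S$ and its neighbourhood $N(S)$ gives $2|S|\le\sum_{e\in S}\deg_B(e)=\sum_{H\in N(S)}\deg_{B,S}(H)\le 2|N(S)|$, where $\deg_{B,S}(H)\le 2$ by the above; hence $|N(S)|\ge|S|$ and the required matching exists.

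The main obstacle is exactly guaranteeing distinctness: a naive greedy selection of the $H_i$ around the cycle can fail at the closing edge, where two previously chosen hyperedges may both be forbidden, so the clean route is the Hall/SDR formulation resting on the ``at most two cycle edges per hyperedge'' bound. The only remaining wrinkle is the degenerate case $\ell=3$, where the hyperedge $\{v_0,v_1,v_2\}$ may contain all three cycle edges and the bound above breaks; there I would verify Hall's condition directly by a short inclusion--exclusion on the three sets $A_i$ of hyperedges through $e_i$, which yields $|A_0\cup A_1\cup A_2|\ge 3$ and, more generally, $|N(S)|\ge|S|$ for every $S$, so the representatives again exist and the Berge $3$-cycle is produced.
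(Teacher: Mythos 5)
Your proposal is correct and follows essentially the same route as the paper: a contradiction argument that builds the Berge cycle via Hall's theorem, resting on the key observation that for $\ell\ge 4$ each triple can contain at most two cycle edges while each cycle edge lies in at least two triples, with $\ell=3$ handled separately. The only difference is that you spell out the Hall-condition double count and the $\ell=3$ inclusion--exclusion, which the paper dismisses as immediate ("Hall condition holds", "the case $\ell=3$ is trivial").
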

\begin{proof}
Suppose, on the contrary, that $L$ is a cycle of length $\ell$ in $G_2$.
Let $\HH(e)$ be the set of triples from $\HH$ containing the pair $e$.
Suppose that $\ell\geq 4$, the case $\ell=3$ is trivial.
Then every triple $E\in \HH$ contains at most two edges from $E(L)$, but every
 $e\in E(L)$ is contained in at least two triples, Hall condition holds.
I.e., every $i$ edges of $E(L)$ (for $1\leq i\leq \ell$) are contained in at least $i$
triples.
So by Hall's theorem one can choose a distinct hyperedge from
$\HH(e)$ for each edge $e$ of $L$. These are forming a Berge cycle of length $\ell$, a contradiction.
\end{proof}

\noindent
{\it The upper bound on $\ex_3(n, C^{(3)}_{2k+1})$.}~\\
Let $\HH$ be a 3-uniform hypergraph that does not contain $C^{(3)}_{2k+1}$ as a subgraph.
Let $G_2$ be defined as in Proposition~\ref{key-prop}.
Then $G_2$ is $C_{2k+1}$-free.
Let $\HH_2$ be the collection of triples from $\HH$
having all the three pairs covered at least twice.
The edges of $\HH_2$ induce triangles in $G_2$, hence we have
\begin{equation}\label{bd-H2}
|\HH_2|\leq N(G_2; C_3)\leq t_{2k+1}(n). 
\end{equation}

Let $\HH_1$  be the set of triples $E$ from $\HH$ having a pair $P(E)$ such that
$P(E)$ is contained only in $E$.
Note that  $|\HH|=|\HH_1|+|\HH_2|$.
In the following, we find an upper bound for $|\HH_1|$ by defining further subfamilies $\HH_3, \dots, \HH_6$.

Color the vertices of $\HH_1$ randomly with two colors.
The probability that for an edge $E\in \HH_1$ the pair $P(E)$ gets the same color and
 the vertex $E\setminus P(E)$ has the opposite color is $1/4$.
This implies that there is a partition $V_1\cup V_2$ of $V(\HH)$ and a
 subfamily $\HH_3\subset \HH_1$ such that  $|\HH_3| \geq (1/4)|\HH_1|$ and
every edge $E$ of $\HH_3$ has two vertices in $V_i$ and
one vertex in $V_{3-i}$ for some $i\in \{1,2\}$ such that
$V_i\cap E = P(E)$.
Split $\HH_3$ into two subfamilies as follows.
\begin{multline}
\HH_4:= \{\{u,v,w\}\in \HH_3: P(E)=\{u,v\}\subset V_i,\;  w\in V_{3-i},\;\\
\max(\deg(w,u), \deg(w,v))\ge 3,\;
i\in \{1,2\} \}\notag
\end{multline}
and let $\HH_5:=\HH_3\setminus \HH_4$.

We claim that the graph $G_4$ consisting
of the pairs $P(E)$, $E\in \HH_4$, is $C_{2k}$-free.
Indeed, suppose, on the contrary, that $L=(v_1, \dots, v_{2k})$ is a cycle of $G_4$.
Since $G_4$ has no edge joining $V_1$ and $V_2$ we may suppose that $L\subset V_1$.
Consider the triples of $\HH_4$ containing the edges of $L$,
 $E_i:= \{ v_i, v_{i+1}, w_i\}$, $(1\leq i\leq 2k-1)$, and $E_{2k}:=\{ v_{2k}, v_1, w_{2k}\}$.
The vertices $w_1, \dots, w_{2k}$ are in $V_2$, so they are not on $L$.
Assume that $\deg(v_1, w_1)\geq 3$.
Then, there is a hyperedge $E_0=\{v_1, w_1, u\}\in \HH$ different from $E_1, \dots, E_{2k}$.
The hyperedges $\{ E_0,$ $E_1, $ $E_2, $ $\dots, $ $E_{2k}\}$ are containing
the consecutive pairs $\{v_1, w_1, v_2, \dots , v_{2k}\}$
in this cyclic order, so form a Berge cycle of length $2k+1$.
Thus,
\begin{equation}\label{bd-H4}
   |\HH_4|=e(G_4)\leq \ex(|V_1|, C_{2k}) + \ex(|V_2|, C_{2k}) \leq \ex(n, C_{2k}).
\end{equation}

Because the multiplicity of the pairs in any edge $E$ in $\HH_5$
is at most 2, one can use a greedy algorithm
to find a subfamily $\HH_6\subset \HH_5$
such that
$|\HH_6| \geq (1/3)|\HH_5|$, where $\HH_6$ is linear,
that is each vertex-pair is covered at most once by an edge of $\HH_6$.

Finally,
\begin{align}
|\HH| &=|\HH_1|+|\HH_2| \leq 4|\HH_3|+|\HH_2|=\notag\\
& =|\HH_2|+ 4|\HH_4| + 4|\HH_5| \leq |\HH_2|+ 4|\HH_4| + 12|\HH_6|. \notag
\end{align}
This with~\eqref{bd-H2},~\eqref{bd-H4},
and the linearity of $\HH_6$ completes the proof of (\ref{eq9}).

{\it The upper bound on $\ex_3(n, C^{(3)}_{2k})$.}~\\
Let $\HH$ be a 3-uniform hypergraph that does not contain $C^{(3)}_{2k}$ as a
subgraph.
Let $G_2$, $\HH_1$, $\HH_2$ be defined for $\HH$ as before.
By Proposition~\ref{key-prop}, $G_2$ is $C_{2k}$-free.
Hence,    $|\HH_2|\leq N(G_2; C_3)\leq t_{2k}(n)$. 

Recall that for each hyperedge $E$ in $\HH_1$, there exists a vertex-pair,
 $P(E)$, such that $P(E)$ is contained only in $E$ in $\HH$.
Let $G_1$ be the graph defined by its edge set as $E(G_1):= \{ P(E): E\in \HH_1 \}$.
We have that $|\HH_1|=e(G_1)$.
Since $G_1$ is obviously $C_{2k}$-free we get
\begin{equation*} 
|\HH| =|\HH_1|+|\HH_2| \leq   t_{2k}(n) +  \ex(n, C_{2k}). \qed
\end{equation*}

\end{proof}
\medskip

\section{$C^{(3)}_{\ell}$-free 3-uniform Linear Hypergraphs}\label{sec-linear}

A {\em theta graph} of order $\ell$, denoted by $\Theta_\ell$,
is a cycle $C_\ell$ with a chord, where  $\ell \geq 4$.
The following result was used implicitly in~\cite{Bondy-Simon}
 and is stated as a separate lemma
 in~\cite[Lemma 2]{verst-2000} and also used in~\cite{bukh} and~\cite{pik-2011}.
Let $F$ be a $\Theta$-graph of order  $\ell$ and $\ell > t\geq 2$.
Let $A\cup B$ be a partition of $V(F)$ with $A,B\neq \emptyset$ such that
  every path of length $t$ in $F$ that starts in $A$ necessarily ends in $A$.
Then  $F$ is bipartite with parts $A$ and $B$.
We need the following corollary, whose proof is left to the reader.
\begin{cor}\label{cor-11}
Let $F$ be a $\Theta$-graph of order  $\ell$, where
$\ell > t\geq 1$ and $t$ is an odd integer.
Let $A\cup B$ be a partition of $V(F)$, $A\neq \emptyset$ such that every path of length $t$ in $F$ that starts in $A$ necessarily ends
in $A$. Then  $A=V(F)$. \qed
\end{cor}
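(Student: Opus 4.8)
The plan is to argue by contradiction, using the $\Theta$-graph lemma stated just above as the engine: that lemma upgrades the one-sided condition ``every $t$-path starting in $A$ ends in $A$'' into full bipartiteness with parts \emph{exactly} $A$ and $B$, and the odd parity of $t$ then clashes with bipartiteness. So I would suppose $B\neq\emptyset$ and show this is impossible.

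First I would dispose of the degenerate case $t=1$, which lies outside the range $t\ge 2$ of the lemma. A path of length $1$ is just an edge, so the hypothesis says there is no edge joining $A$ and $B$. Since $\Theta_\ell$ is connected and $A\neq\emptyset$, the assumption $B\neq\emptyset$ would split $V(F)$ into two nonempty sides with no edge between them, contradicting connectivity; hence $A=V(F)$ already in this case.

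For odd $t\ge 3$ we have $t\ge 2$, so under the assumption $B\neq\emptyset$ the lemma applies verbatim and yields that $F$ is bipartite with parts $A$ and $B$. Now I would exhibit an explicit path of length $t$ that starts in $A$: every vertex of $\Theta_\ell$ lies on its defining cycle $C_\ell$, and since $t<\ell$ I may start from any chosen $v\in A$ and traverse $t$ consecutive edges around this cycle without repeating a vertex, obtaining a path $v=u_0,u_1,\dots,u_t$ of length exactly $t$. By hypothesis $u_t\in A$. On the other hand, in a bipartite graph with parts $A,B$ the vertices along any path alternate sides, so $u_i\in A$ precisely when $i$ is even; as $t$ is odd this forces $u_t\in B$. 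Since $A\cap B=\emptyset$, this is the desired contradiction, so $B=\emptyset$ and $A=V(F)$.

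The only point that needs care is the existence of the length-$t$ path emanating from an arbitrary vertex of $A$, and this is exactly where the hypothesis $t<\ell$ is used, together with the fact that the theta graph contains a Hamiltonian cycle (its defining $C_\ell$), so that walking $t<\ell$ steps along it never closes up or runs out of fresh vertices. Everything after that is a parity bookkeeping against the bipartition handed to us by the lemma, so I do not expect any further obstacle.
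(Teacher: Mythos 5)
Your proof is correct and is exactly the argument the paper intends --- the paper leaves the corollary's proof to the reader, and the natural route is the one you take: apply the preceding theta-graph lemma (under the assumption $B\neq\emptyset$) to get bipartiteness with parts $A$ and $B$, then contradict it with a length-$t$ path along the Hamiltonian cycle $C_\ell$, whose existence uses $t<\ell$, since odd $t$ forces such a path to cross from $A$ to $B$. Your separate treatment of $t=1$ (via connectivity, since the lemma requires $t\geq 2$) is a necessary detail and is handled correctly.
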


\vskip -2mm
We also use the following easy fact, which is used in~~\cite{Bondy-Simon},~\cite{bukh} and~\cite{pik-2011}, too.
If the $n$-vertex graph $G$ contains no $\Theta$-graph of order at least $\ell\geq 4$,
 then $e(G)  \le  (\ell - 2)n$.
 In other words
\begin{equation}
  \ex(n, \Theta_{\geq \ell} ) \leq (\ell -2)n.    \label{eq15}
  \end{equation}
\smallskip

\vskip -2mm
\begin{proof}[Proof of the upper bound on $\extrilin(n, C^{(3)}_{2k+1})$ in
Theorem~\ref{ck-linear}]~\\
Let $\HH$  be  a 3-uniform hypergraph on $n$ vertices such that no two
hyperedges meet in two vertices.
Suppose that $\HH$ contains no $C^{(3)}_{2k+1}$ and let
$\delta$ be the third of the the average degree.
We have $\sum_{v\in V(\HH)} \deg(v) = 3|\HH|=3\delta n$.
Then, there exists a subhypergraph $\HH'$ on  $n'$ vertices such that
the degree of each vertex of $\HH'$ is at least $\delta$.
Therefore, we may suppose that every degree of $\HH$ is at least $\delta$, and
also that $\delta \geq 11k$.

The mapping  $\pi : \HH \to \binom{[n]}{2}\cup {\emptyset}$  is called a
 {\it choice function} if $\pi(E)\subset E$ for each  $E\in \HH$.
There are $4^{|\HH|}$  such choice functions.
Let $\partial\HH$  be the set of vertex-pairs contained in the members of $\HH$
 and consider a coloring of $\partial\HH$, where the color of each pair is
 given by the single hyperedge of $\HH$ containing it.
We call a subgraph $G$ of $\partial\HH$ {\it multicolored},
 if all edges of $G$ have different colors under this coloring.
For a choice function $\pi$ on $\HH$, define the graph  $G_\pi$
 as the graph induced by the edge set
 $\{\pi(E): \pi(E)\neq \emptyset, E\in \HH\}$.
Because $\HH$ is a linear hypergraph, for two different hyperedges
 $E$ and $E'$ in $\HH$ we have $\pi(E) \neq \pi(E')$.
First, we consider the properties of arbitrary multicolored $G_\pi$,
 later we will define a special $\pi$.
Clearly, $G_\pi$ has no cycle $C_{2k+1}$.

\begin{lem}\label{lem15}
Let $T$ be a subtree (not necessarily spanning) in $G_\pi$, let $x\in V(T)$ be an arbitrary vertex,
 and let $V_i:= N_i(x)$ in $T$, the set of vertices of distance $i$ from $x$ in the tree $T$.
Consider $G_i:= G_\pi[V_i]$, the subgraph of  $G_\pi$  restricted to $V_i$.
Then $G_i$ has no $\Theta$-graph of order $2k$ or larger.
\end{lem}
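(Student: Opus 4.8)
The plan is to argue by contradiction. Suppose $G_i$ contains a $\Theta$-graph $F$ whose cycle has length $\ell\ge 2k$, with the two degree-three (branch) vertices joined inside $F$ by the chord and by the two arcs of the cycle. I would root $T$ at $x$. Every vertex of $F$ lies in $V_i$, that is, at tree-distance exactly $i$ from $x$, so for any two vertices $u,w\in V(F)$ the unique $T$-path $Q_{uw}$ runs up to their least common ancestor and down again; hence $|Q_{uw}|=2(i-\mathrm{depth}(\mathrm{lca}(u,w)))$ is \emph{even}, and all its interior vertices sit at tree-distance $<i$, so $Q_{uw}$ is internally disjoint from $V_i\supseteq V(F)$. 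The key observation is then that for any $u$--$w$ path $W$ inside $F$ the union $W\cup Q_{uw}$ is a genuine cycle of $G_\pi$ of length $|W|+|Q_{uw}|$. Since $G_\pi$ carries no $C_{2k+1}$ and $|Q_{uw}|$ is even, this rules out, for every pair $u,w$, an $F$-path of the odd length $2k+1-|Q_{uw}|$ joining them.

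The heart of the argument is to turn these pairwise restrictions into a contradiction through Corollary~\ref{cor-11}. I would fix an odd $t$ with $t<\ell$ (say $t=2k-1$, which is legitimate because $\ell\ge 2k$) and use the tree to $2$-colour $V(F)$ in such a way that a length-$t$ path of $F$ joining two \emph{differently} coloured vertices would close, through the short even detour $Q_{uw}$ over their least common ancestor, into a cycle of length exactly $2k+1$. As $G_\pi$ is $C_{2k+1}$-free, no such bichromatic length-$t$ path can exist; equivalently, every length-$t$ path of $F$ that starts in the colour class $A$ of a fixed vertex ends in $A$. Corollary~\ref{cor-11} then forces $A=V(F)$, so the colouring is constant, which is impossible once the colouring is arranged to separate a suitable pair (for instance the two branch vertices, or a pair realising the smallest tree-distance inside $V(F)$). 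This contradiction proves that no such $F$ exists.

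The step I expect to be the main obstacle is making the colouring honest, i.e. reconciling a single $2$-colouring with the fact that the detour lengths $|Q_{uw}|$ vary with the depth of the least common ancestor. To hit the precise value $2k+1$ one must select the pair $u,w$ and the $F$-path $W$ so that $|W|$ equals $2k+1-|Q_{uw}|$ and has the right (odd) parity, and this is exactly where the richness of a long $\Theta$-graph is indispensable: between appropriate vertices $F$ offers several path lengths of both parities---the chord gives an odd shortcut of length $1$, each arc gives a longer path, and the two arcs together trace the whole cycle---so that Corollary~\ref{cor-11} can upgrade ``some odd length is available'' to ``the required odd length occurs within one colour class.'' I would also check the degenerate situations separately, namely when the relevant $F$-path reduces to the single chord edge and the extremal order $\ell=2k$, and verify that the interiors of $W$ and of the tree detours never meet, which is automatic from the level structure. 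Having excluded $\Theta$-graphs of order $\ge 2k$ in $G_i$, \eqref{eq15} then yields the edge bound $e(G_i)\le(2k-2)|V_i|$ that the rest of the proof requires.
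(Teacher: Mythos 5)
Your first paragraph is correct and coincides with the paper's starting point: for $u,w\in V_i$ the tree detour $Q_{uw}$ has even length $2\bigl(i-\mathrm{depth}(\mathrm{lca}(u,w))\bigr)$, is internally disjoint from $V_i$, and hence an $F$-path of length $2k+1-|Q_{uw}|$ between $u$ and $w$ would close into a forbidden $C_{2k+1}$. The gap is in the step you yourself flag as the obstacle, and it is not repaired by your suggestions. To feed Corollary~\ref{cor-11} you need \emph{one} fixed odd $t$ and the hypothesis that \emph{every} length-$t$ path of $F$ leaving $A$ returns to $A$; this forces every bichromatic pair to have the \emph{same} detour length $2k+1-t$. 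Your choice $t=2k-1$ requires every bichromatic pair to be siblings in $T$, which fails for any nontrivial colouring unless all of $V(F)$ are children of a single vertex. Conversely, a colouring chosen to separate a prescribed pair (the two branch vertices, or a pair realising the smallest tree-distance) will in general have other bichromatic pairs with strictly longer detours; a length-$t$ path between those closes into a cycle of length at least $2k+3$, which is not forbidden, so the hypothesis of Corollary~\ref{cor-11} cannot be verified. Appealing to the ``richness'' of the theta graph does not help: the corollary cannot mix different values of $t$.

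The paper closes this gap with two ingredients absent from your proposal. First, the colouring is not chosen to separate a prescribed pair: it is the partition of $V_i$ into the descendant sets $V_i(y)$, $y\in V_1$, of the children of the root. For $a\in V_i(y_1)$ and $b\notin V_i(y_1)$ the least common ancestor is $x$ itself, so the detour length is uniformly $2i$ and Corollary~\ref{cor-11} applies with $t=2k+1-2i$ and $A:=V_i(y_1)\cap V(F)$. Second---and this is the essential point---the conclusion $A=V(F)$ is then \emph{not} a contradiction; it merely confines $F$ to the single subtree below $y_1$, and this case genuinely occurs. The paper handles it by induction on $i$: apply the lemma to the subtree $T_1$ rooted at $y_1$, in which $V_i(y_1)=N_{i-1}(y_1)$, with base case $i=1$, where a path on $2k$ vertices in $G_1$ closes through $x$ into a $C_{2k+1}$. (Equivalently, one can unroll the induction: take the \emph{deepest} vertex $p$ of $T$ having all of $V(F)$ among its descendants and partition by the child subtrees of $p$; all bichromatic detours then equal $2(i-\mathrm{depth}(p))$ and both colour classes are nonempty by the choice of $p$. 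Note this is the pair with the \emph{shallowest} common ancestor, i.e.\ largest tree-distance---the opposite of the pair your proposal singles out.) Your one-shot argument has no mechanism for the case when $F$ sits inside one child subtree, so as written the proof does not go through.
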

\begin{cor}\label{cor-16}
$e(G_i)\leq (2k-2)|V_i|$ for $ 1\leq i \leq k$.
\end{cor}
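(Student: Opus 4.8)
The plan is to deduce this directly from Lemma~\ref{lem15} together with the edge bound~(\ref{eq15}). Lemma~\ref{lem15} asserts that the restriction $G_i = G_\pi[V_i]$ contains no $\Theta$-graph of order $2k$ or larger. Since we assume $k \ge 2$, the threshold $\ell = 2k$ satisfies $\ell \ge 4$, so the hypothesis of~(\ref{eq15}) is met when we take the vertex set to be $V_i$. Applying~(\ref{eq15}) with $n$ replaced by $|V_i|$ and $\ell = 2k$ yields $e(G_i) \le (2k-2)|V_i|$, which is exactly the claimed inequality.

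There is essentially no obstacle here: the entire content lies in Lemma~\ref{lem15}, and the corollary is a verbatim substitution into the forbidden-theta edge bound. The only small points worth checking are that the order parameter $\ell = 2k$ is at least $4$ (guaranteed by $k \ge 2$, so that~(\ref{eq15}) applies) and that~(\ref{eq15}) forbids \emph{all} $\Theta$-graphs of order at least $\ell$, matching the phrase ``order $2k$ or larger'' in Lemma~\ref{lem15}. The restriction to $1 \le i \le k$ plays no role in the inequality itself; it simply records the range of levels relevant to the later argument, since the bound in fact holds for every level $V_i$ on which $G_\pi$ is defined.
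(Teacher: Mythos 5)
Your proposal is correct and is exactly the argument the paper intends (the paper leaves it implicit): Lemma~\ref{lem15} forbids $\Theta$-graphs of order $\geq 2k$ in $G_i$, and substituting $\ell = 2k$ and $n = |V_i|$ into the bound~(\ref{eq15}) gives $e(G_i) \leq (2k-2)|V_i|$. Your side remarks --- that $k \geq 2$ guarantees $\ell \geq 4$, and that~(\ref{eq15}) excludes all theta graphs of order at least $\ell$ --- are the right hypotheses to check and match the paper's usage.
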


\begin{proof}[Proof of Lemma~\ref{lem15}]
We use induction on $i$.
Since $V_0={x}$, and $V_1$ (more exactly $G_1$) contains no path of $2k$ vertices,
 it does not contain a $\Theta_{\geq 2k}$ either.
From now on, we may suppose that $i\geq 2$.

Suppose, on the contrary, that $F$ is a $\Theta$ subgraph of  $G_i$ of order
$\ell\geq 2k$, $i\geq 2$.
For arbitrary $y\in V_1$,  let $V_i(y)$ be the subset of
descendants of $y$ in $V_i$ in the tree $T$.
Consider the partition of $V_i$ defined as  $\{V_i(y): y\in V_1\}$.
There exists a $y_1\in V_1$ such that $A:=V(y_1)\cap V(F)\neq \emptyset$.

We claim that $F$ is contained in $V(y_1)$.
Note that there is no path $P(a,b)$ of $F$ (neither of $G_i$) of length $2k+1-2i$
 that starts in some vertex $a\in A \subset V_i(y_1)$ and
 ends in another vertex $b\in V_i\setminus V(y_1)$.
Otherwise, the $xy_1a$ and $xb$ paths on $T$ have only a single common vertex (namely $x$), have lengths $i$
 so together with $P(a,b)$ they form a $C_{2k+1}$ in $G_\pi$, a contradiction.
Therefore, every path of length $2k+1-2i$ in $F$, that starts in $A$ ends in $A$.
Corollary~\ref{cor-11} implies that $A=V(F)$, i.e., $V(F)\subset V(y_1)$.

To finish the proof of Lemma~\ref{lem15} simply use induction
 to the subtree $T_1$ of $T$ consisting of all descendants of $y_1$.
Then $N_{i-1}(y_1)$ in $T_1$ is exactly $V_i(y_1)$, so it does not contain
 any $\Theta_{\geq 2k}$.
\end{proof}

We say for two sets of sequences of integers
 $\alpha=$ $(a_1,$ $\dots,$ $a_k)$ and $\beta=$ $(b_1,$ $\dots,$ $b_k)$
 that $\alpha > \beta$, if there is an $i$ such that $a_i> b_i$ and $a_j=b_j$ for all $j<i$.
This is called the lexicographical ordering, and it is indeed a linear order.

We are ready to define a concrete $T$ and a choice function $\pi$.
Fix a vertex $x\in V(\HH)$ arbitrarily, let $V_0:=\{x\}$.
Consider all choice functions $\pi$ and all multicolored trees of $G_{\pi}$
 with root and center $x$ and radius at most $k$.
Let $T$ be such a tree for which the sequence of the neighborhood sizes
 ($|N_1(x)|,$ $\dots,$ $|N_k(x)|$) takes its maximum in the lexicographic order.
Since $\HH$ is linear we have $|N_1(x)|=\deg_\HH(x)$.
Recall that $N_i(x)$ is denoted by $V_i$, $0\leq i\leq k$.
Our aim is to prove that the sizes of the $|V_i|$'s increase rapidly as follows.

\begin{lem}\label{lem-17}
For $1\leq i\leq k-1$ we have
$|V_{i+1}|\geq \dfrac{\delta -7k}{2k} |V_i|$.
\end{lem}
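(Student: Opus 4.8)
The plan is to fix a level $V_i$ with $1\le i\le k-1$ and to estimate, from both sides, the number of (vertex, hyperedge) incidences $(v,E)$ with $v\in V_i$, $E\ni v$, for which $E$ meets the next level $V_{i+1}$; I will call these the \emph{forward} incidences. The minimum-degree reduction gives the lower bound: since every vertex of $\HH$ has degree at least $\delta$, we have $\sum_{v\in V_i}\deg_\HH(v)\ge \delta|V_i|$, so there are at least $\delta|V_i|$ incidences $(v,E)$ with $v\in V_i$. I would classify each incident triple $E$ as forward (meeting $V_{i+1}$), internal (contained in $V_0\cup\cdots\cup V_i$), or \emph{outward} (containing a vertex at tree-distance greater than $i+1$ from $x$, including vertices outside $T$ altogether), and then show that almost all incidences are forward.

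The first job is to eliminate the outward incidences and to bound the internal ones. Here lex-maximality of $T$ does the work: if $v\in V_i$ lay in a triple $E=\{v,a,b\}$ with $a$ outside $V_0\cup\cdots\cup V_{i+1}$, I would reparent $a$ (and its subtree, if any) to $v$ by setting $\pi(E)=\{v,a\}$; this leaves $V_1,\dots,V_i$ unchanged while strictly enlarging $|V_{i+1}|$, contradicting the maximality of the sequence $(|V_1|,\dots,|V_k|)$. Hence every neighbour of $V_i$ lies in $V_{i-1}\cup V_i\cup V_{i+1}$, so the non-forward incidences are all internal. To count those, I route the pairs inside $V_i$ into $G_\pi$ one triple at a time and apply Corollary~\ref{cor-16}, which caps the edges of $G_i=G_\pi[V_i]$ by $(2k-2)|V_i|$; the pairs joining $V_{i-1}$ to $V_i$ are controlled by the same $\Theta_{\ge 2k}$-free reasoning applied across the two consecutive levels. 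Counting each internal edge against its (at most two) endpoints in $V_i$, the total number of internal incidences is at most $7k|V_i|$, so the number of forward incidences is at least $(\delta-7k)|V_i|$.

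For the matching upper bound I charge each forward incidence $(v,E)$ to a vertex $w\in E\cap V_{i+1}$; since $\HH$ is linear, distinct charges to the same $w$ correspond to distinct neighbours $v\in V_i$ of $w$ in $\partial\HH$, so the number of charges at $w$ equals its back-degree into $V_i$. Lemma~\ref{lem15}, applied across the levels $V_i$ and $V_{i+1}$ together with the length-$i$ tree paths running back to $x$, forbids any $\Theta_{\ge 2k}$ and hence caps this back-degree by $2k-1$: a vertex with $2k$ neighbours in $V_i$ would, after combining the corresponding $G_\pi$-edges with the tree, create a configuration of order $2k$ ruled out by the lemma. Therefore the forward incidences number fewer than $2k|V_{i+1}|$, and combining this with the lower bound gives $(\delta-7k)|V_i|\le 2k|V_{i+1}|$, which is precisely the claimed inequality.

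I expect the two genuinely delicate points to be the following. First, the reparenting move in the second paragraph must be checked to remain a valid \emph{multicolored} tree: because each hyperedge may serve as only one tree edge and carries a third vertex, one must verify that the triple $E$ used to grow $V_{i+1}$ is not already spent elsewhere in $T$, and the hyperedges for which this obstruction genuinely arises have to be absorbed into the $7k$ slack rather than allowed to block growth. Second, pinning the exact constants — the precise passage from internal edges to internal incidences, and the sharp $\Theta_{\ge 2k}$-free back-degree bound of $2k-1$ across adjacent levels — is where the bookkeeping is heaviest; the hypothesis $\delta\ge 11k$ is evidently present only to guarantee that the resulting coefficient $\delta-7k$ stays comfortably positive after these lower-order losses.
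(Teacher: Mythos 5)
Your overall frame---bounding the $\HH$-incidences at $V_i$ from below by $\delta|V_i|$ and from above by data attached to the tree levels, using lex-maximality of $T$ and the $\Theta_{\geq 2k}$-free lemma---is the same as the paper's, but the step carrying all the weight is wrong. You cap the forward incidences by $2k|V_{i+1}|$ via a \emph{per-vertex} claim: each $w\in V_{i+1}$ has at most $2k-1$ neighbours in $V_i$, ``by Lemma~\ref{lem15} applied across the levels.'' Lemma~\ref{lem15} forbids $\Theta_{\geq 2k}$ only \emph{inside a single level}, i.e.\ in $G_\pi[V_i]$; it gives no bound on degrees between consecutive levels, and no such bound exists. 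Parity is the obstruction: if $w\in V_{i+1}$ has back-neighbours $v_a,v_b\in V_i$, any cycle formed from the two back-edges and the tree has even length $2(i-d)+2$ (where $d$ is the depth of the common ancestor of $v_a,v_b$), so it can never be completed to a $C_{2k+1}$, and a cross-level theta graph is bipartite, hence harmless. Concretely, hyperedges $E_j=\{v_j,z_j,w\}$ with $v_j\in V_i$, $z_j,w\in V_{i+1}$, all $v_j,z_j$ distinct, form a linear, Berge-$C_{2k+1}$-free configuration compatible with lex-maximality in which $w$ has arbitrarily large back-degree; the chosen pairs $\{w,z_j\}$ form a \emph{star} in $G_{i+1}$, which contains no $\Theta$-graph, so Corollary~\ref{cor-16} is silent. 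What is true, and what the paper uses, is an \emph{aggregate} bound: the hyperedges with one vertex in $V_i$ and two in $V_{i+1}$ (the class $\BB_i^1$) number at most $e(G_{i+1})\leq(2k-2)|V_{i+1}|$, and together with the $|\HH_i|=|V_{i+1}|$ tree hyperedges this yields the coefficient $2k$. The bound must be on edges of $G_{i+1}$, not on back-degrees; your charging scheme cannot be repaired without switching to this.

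There are two further gaps, one of which you acknowledge but do not close. First, hyperedges with \emph{two} vertices in $V_i$ and one in $V_{i+1}$ (the paper's $\BB_i^2$) are ``forward'' in your classification and get charged to $V_{i+1}$, yet their number is controlled only by $e(G_i)\leq(2k-2)|V_i|$; charging them against $V_{i+1}$ inflates your claimed $2k|V_{i+1}|$ bound by up to roughly $4k|V_i|$, so they must be absorbed on the $V_i$ side, as the paper does. Second, your reparenting argument, which is supposed to show that every non-forward edge at $V_i$ lies inside $V_0\cup\dots\cup V_i$, fails exactly for the tree hyperedges of $\HH_{i-1}$: their third vertex may lie beyond $V_{i+1}$ or outside $T$, and the move $\pi(E)=\{v,a\}$ is unavailable because $E$ is already spent on a tree edge (removing it disconnects the subtree). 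You flag this obstruction but never do the bookkeeping; it is fixable since $|\HH_{i-1}|=|V_i|$, but it is not automatic. Finally, even with both repairs, the internal incidences include $\sum_{j<i}|V_j|$ (from earlier tree hyperedges whose third vertex lands in $V_i$) and a $(2k-2)|V_{i-1}|$ term, which are not $O(k)|V_i|$ a priori; the paper only reaches the stated inequality through a closing induction that first establishes $2|V_j|\leq|V_{j+1}|$ and uses it to absorb these lower-order terms. That induction is entirely missing from your argument, so even the ``$7k$ slack'' on the lower-bound side is not justified as written.
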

This lemma completes the proof, because we obtain
$n\geq$ $|V_k|\geq$ $(\delta- 7k)^{k-1}$ $(2k)^{-k+1}$ $|V_1|$.
This and  $|V_1|=\deg_\HH(x)\geq \delta$ give $2kn^{1/k}+7k\geq \delta$.

\begin{proof}[Proof of Lemma~\ref{lem-17}]
Let  $\HH_i$ be the hyperedges of $\HH$ containing the edges of $T$
joining $V_i$ to $V_{i+1}$, $0 \leq i \leq k-1$, we have  $|\HH_i|=|V_{i+1}|$.
If $uvw=E \in \HH_i$  with $u\in V_i,$ $v\in V_{i+1},$
then $w \notin  V_j$ with $j<i$.
Otherwise, leaving out the edge  $uv$ from $T$ and joining  $wv$
results in a multicolored tree preceding $T$ in the lexicographic order.

Let $\BB_i$ be the set of hyperedges from
 $\HH \setminus$ $(\HH_0\cup$ $\HH_1\cup$ $\dots \cup$ $\HH_i)$
 meeting $V_i$, but not meeting $\cup_{j<i} V_j$, $0\leq i\leq k-1$.
We have $\BB_0=\emptyset$.
If  $E\in \BB_i$, then $E\subset V_i\cup V_{i+1}$.
Otherwise, if $u\in E\cap V_i$ and $v\in E\setminus (V_i\cup V_{i+1})$
then truncating our tree at $V_0\cup V_1\cup  \dots \cup V_{i+1}$
 and joining the edge $uv$ result in another tree lexicographically larger than $T$.

Let $\BB_i^{\alpha}$, $0\leq i\leq k-1$, be the set of those hyperedges from
 $\BB_i$, that meet $V_i$ exactly in $\alpha$ vertices, $\alpha =1$, $2$ or $3$.
The graph $G_i$, for $1\le i \le k-1$, is defined on the vertex set $V_i$ as follows.
It contains exactly one vertex-pair from each member of
$\BB_i^3$ and the pairs $E\cap V_i$ for $E\in \BB_i^2 \cup \BB_{i-1}^1$.
For $i=k$, the edge set of $G_k$ consists only of the sets $\{E\cap V_k: E\in \BB_{k-1}^1\}$, since
  $\BB_k$ is undefined.
The graph $G_\pi$ consisting of the edges of $T$ and the $G_i$'s,
$1\leq i\leq k$, is a multicolored subgraph.
So Corollary~\ref{cor-16} implies that
\begin{equation}\label{eqn-20}
e(G_i)\leq (2k-2) |V_i|.
\end{equation}

Consider the $\HH$-degrees of the elements of $V_i$, $(1\le i\le k-1)$.
Their total sum is at least $\delta|V_i|$.
Obviously,
\[
\sum_{v\in V_i} \deg_\HH(v) = \sum_{E\in \HH} |E\cap V_i|.
\]
The edges of $\HH$ meeting $V_i$ belong to some $\HH_j$, $j\leq i$,
or to $\BB_{i-1}\cup \BB_i$.
An edge $E\in \HH_j$ can meet $V_i$ in at least two elements,
only if $j$ is equal to $i-1$ or $i$.
We obtain for $1\leq i\leq k-1$
\begin{multline}\notag
\delta |V_i|\leq \sum_{v\in V_i} \deg(\HH)(v) = \sum_{E\in \HH}
|E\cap V_i| \\ \leq
\left( \sum_{0\le j\le i-2} |\HH_j| \right) +
2|\HH_{i-1}|+ 2|\HH_i|+    
|\BB_{i-1}^2| + 2|\BB_{i-1}^1| + 3|\BB_i^3| + 2|\BB_i^2| + |\BB_i^1|.
\end{multline}
Inequality~\eqref{eqn-20} implies that
\begin{align}
|\BB_{i-1}^2|&\leq e(G_{i-1}) \leq (2k-2)|V_{i-1}|,\notag\\
2|\BB_{i-1}^1| + 3|\BB_i^3| + 2|\BB_i^2|&\leq
              3(|\BB_{i-1}^1| + |\BB_i^3|+|\BB_i^2|)=3e(G_i) \leq (6k-6)|V_i|, \notag\\
|\BB_i^1|&\leq e(G_{i+1}) \leq (2k-2)|V_{i+1}|. \notag
\end{align}
Using these inequalities and the fact that $|\HH_j|=|V_{j+1}|$ we obtain that
\begin{equation*} 
    \delta |V_i| \leq \left(\sum_{1\leq j\leq  i-1} |V_j|\right) + 2|V_i| + 2|V_{i+1}|+
        (2k-2)|V_{i-1}| + (6k-6)|V_i| + (2k-2)|V_{i+1}|.
\end{equation*} 
By rearranging we have
\begin{equation}\label{eqn-17}
(\delta -(6k-4)) |V_i| \leq \left(\sum_{1\leq j\leq i-1} |V_j|\right) +  (2k-2)|V_{i-1}| +
2k|V_{i+1}|.
\end{equation}
For $i=1$  the fact that $\BB_0=\emptyset$
implies the slightly stronger $(\delta -(6k-4)) |V_1| \leq 2k|V_2|$.
So Lemma~\ref{lem-17} holds for $i=1$.
For larger $i$ we use induction and~\eqref{eqn-17} to prove first
 that $2|V_i|\leq |V_{i+1}|$ for all $i<k$ and then the sharper inequality of Lemma~\ref{lem-17}.
\end{proof}
\end{proof}



\bibliographystyle{amsplain}
\bibliography{count-triangle}

\providecommand{\bysame}{\leavevmode\hbox to3em{\hrulefill}\thinspace}
\providecommand{\MR}{\relax\ifhmode\unskip\space\fi MR }
\providecommand{\MRhref}[2]{%
  \href{http://www.ams.org/mathscinet-getitem?mr=#1}{#2}
}
\providecommand{\href}[2]{#2}
\begin{thebibliography}{10}

\bibitem{alon-shik}
N.~Alon and C.~Shikhelman, \emph{Triangles in {$H$}-free graphs},
   \newline
   http://arxiv.org/abs/1409.4192.

\bibitem{bol-gy-2008}
B.~Bollob{\'a}s and E.~Gy{\H{o}}ri, \emph{Pentagons vs. triangles}, Discrete
  Math. \textbf{308} (2008), no.~19, 4332--4336.

\bibitem{Bondy-Simon}
J.~A. Bondy and M.~Simonovits, \emph{Cycles of even length in graphs}, J.
  Combin. Theory Ser. B \textbf{16} (1974), 97--105.

\bibitem{bukh}
B.~Bukh and Z.~Jiang, \emph{A bound on the number of edges in graphs without an even cycle},
  \newline
  http://arxiv.org/abs/1403.1601.

\bibitem{jiang}
C.~Collier, C.~N. Graber, and T.~Jiang, \emph{Linear tur\'an numbers of
  $r$-uniform linear cycles and related ramsey numbers},
  http://arxiv.org/abs/1404.5015.

\bibitem{erdos}
P.~Erd{\H{o}}s, \emph{On some problems in graph theory, combinatorial analysis
  and combinatorial number theory}, Graph theory and combinatorics
  ({C}ambridge, 1983), Academic Press, London, 1984, pp.~1--17.

\bibitem{EG-path}
P.~Erd{\H{o}}s and T.~Gallai, \emph{On maximal paths and circuits of graphs},
  Acta Math. Acad. Sci. Hungar \textbf{10} (1959), 337--356.

\bibitem{Grz}
A.~Grzesik, \emph{On the maximum number of five-cycles in a triangle-free
  graph}, J. Combin. Theory Ser. B \textbf{102} (2012), no.~5, 1061--1066.

\bibitem{gyC5}
E.~Gy{\H{o}}ri, \emph{On the number of $C_5$'s in a triangle-free graph},
Combinatorica \textbf{9} (1989), 101--102.

\bibitem{gy-nat-3unif}
E.~Gy{\H{o}}ri and N.~Lemons, \emph{3-uniform hypergraphs avoiding a given odd
  cycle}, Combinatorica \textbf{32} (2012), no.~2, 187--203.

\bibitem{gy-nat-kunif}
E.~Gy{\H o}ri and N.~Lemons, \emph{Hypergraphs with no cycle of a given
  length}, Combin. Probab. Comput. \textbf{21} (2012), no.~1-2, 193--201.

\bibitem{gy-li-2009}
E.~Gy{\H o}ri and H.~Li, \emph{The maximum number of triangles in
  {$C_{2k+1}$}-free graphs}, Combin. Probab. Comput. \textbf{21} (2012),
  no.~1-2, 187--191.

\bibitem{Razb-pentagon}
H.~Hatami, J.~Hladk{\'y}, D.~Kr{\'a}l, S.~Norine, and A.~Razborov, \emph{On the
  number of pentagons in triangle-free graphs}, J. Combin. Theory Ser. A
  \textbf{120} (2013), no.~3, 722--732.

\bibitem{KovSosTur}
T. K\H{o}v\'ari, V. T. S\'os, and P. Tur\'an:
\emph{On a problem of K. Zarankiewicz},
Colloq. Math. \textbf{3} (1954), 50--57.

\bibitem{pik-2011}
O.~Pikhurko, \emph{A note on the {T}ur\'an function of even cycles}, Proc.
  Amer. Math. Soc. \textbf{140} (2012), no.~11, 3687--3692.

\bibitem{verst-2000}
J.~Verstra{\"e}te, \emph{On arithmetic progressions of cycle lengths in
  graphs}, Combin. Probab. Comput. \textbf{9} (2000), no.~4, 369--373.

\end{thebibliography}
\end{document}